\documentclass[11pt]{article}
\usepackage{amsfonts,amsmath,amsthm,amssymb}
\usepackage{graphicx}
\theoremstyle{plain}
\newtheorem{theorem}{Theorem}[section]

\newtheorem{corollary}{Corollary}[section]
\newtheorem{example}{Example}
\newtheorem{definition}{Definition}[section]
\newtheorem{lemma}{Lemma}[section]
\begin{document}

\title{\textbf{Constructing isothermal curvature line coordinates on surfaces which admit them}}
\author
{Eugenio Aulisa (eugenio.aulisa@ttu.edu)\\
Department of Mathematics and Statistics\\
Texas Tech University, Broadway\&Boston Ave.
Lubbock, TX  79409-1042, USA\\
Magdalena Toda(magda.toda@ttu.edu)\\
Department of Mathematics and Statistics\\
Texas Tech University, Broadway\&Boston Ave.
Lubbock, TX  79409-1042, USA\\
Zeynep Sultan Kose (zsultankose@hotmail.com)\\
Department of Mathematics and Statistics\\
Texas Tech University, Broadway\&Boston Ave.
Lubbock, TX  79409-1042, USA\\
}

\date{}
\maketitle
\numberwithin{equation}{section}
\thispagestyle{empty}
\begin{abstract}
{\it Isothermic parameterizations} are synonyms of isothermal curvature line parameterizations, for surfaces immersed in Euclidean spaces. We provide a method of constructing isothermic coordinate charts on surfaces which admit them, starting from an arbitrary chart. One of the primary applications of this work consists of numerical algorithms for surface visualization. 
\end{abstract}

MSC 2010: 53A10

Keywords: isothermal coordinates, isothermic coordinates, isothermic surfaces.
\newpage

\section
{ Constructing Isothermic Coordinates (Curvature Line Isothermal Charts) }

{\it Isothermic parameterizations} are synonyms of isothermal curvature line parameterizations \cite{DO,lE,AP}, for surfaces immersed in Euclidean spaces. By definition, {\it isothermic surfaces} represent immersed surfaces which admit at least one isothermic parameterization. The initial motivation of this project came forward from solving Bonnet problems analytically and numerically, by using Cartan's theory of structure equations, and numerical analysis. The authors also studied dual surfaces, in the sense of Christoffel transforms. Such transforms exist exclusively for isothermic coordinate charts.

In a previous work \cite{KTA}, we solved Bonnet problems in terms of Cartan theory in conformal coordinates in the most general context of regular surfaces. That is, for any compatible first and second fundamental form, corresponding families of solutions were obtained (up to roto-translations).

Due to the high impact of isothermic coordinates in various areas, it became important to study necessary and sufficient conditions for the existence of isothermic coordinates on surfaces; and whenever such coordinates exist, a practical method of constructing them became imperative.

We provide a method of constructing isothermic coordinate charts on surfaces which admit them, starting from an arbitrary chart (parameterization). One of the primary applications of this work consists of numerical algorithms for surface visualization. Many of these results can be generalized to immersions of $2$-dimensional manifolds in n-dimensional space forms, but our main goal was a construction algorithm with numerical implementation and visualization.

\section{Fundamental Concepts of Surface Theory}

This work exclusively involves immersed (respectively, embedded) surfaces in the Euclidean $3$-dimensional space.

\begin{definition}
\label{surf}

A {\it local surface} is a differentiable mapping from an open, simply connected subset of the Euclidean plane, to $\mathbb{R}^3$. Various references use the name of {\it patch of a surface}, or {\it local patch} instead of local surface. We will call such a local surface a {\it surface immersion}, if the Jacobian of the differential mapping has rank two at every point. This is equivalent to the differential map being 1-1. Remark that such a surface is generally not embedded. If the local surface (immersion) represents an injective mapping in itself, then we will call such a mapping an {\it embedding}, or an {\it injective patch}.

In a more global sense, a connected set $M$ in $\mathbb{R}^3$ is said to be a {\it regular 2-dimensional surface} if for an arbitrary point $p\in M$ there exist an open ball $U_p$ in $\mathbb{R}^3$ with center at $p$ and a homeomorphism  $\psi $ which maps $M\cap U_p$ onto an open disk on some plane in the space $\mathbb{R}^3$. Such a homeomorphism is frequently called a {\it chart}, or a {\it system of local coordinates on a surface}.
\end{definition}

It is worth noting that, in the previous definition, the word disk can be replaced by an arbitrary open set of a plane, diffeomorphic to a disk.
\

Let us consider an immersion $r=r(u,v)=(X(u,v),Y(u,v),Z(u,v))$ (of an open, simply connected set) in $\mathbb{R}^3$. Note that the property of this map $r=r(u,v)$ being an immersion is equivalent to the property that the vectors $\frac{\partial{r}}{\partial{u}}$ and $\frac{\partial{r}}{\partial{v}}$ are linearly independent at every point. These span a tangent plane at each point, that is, generate a tangent bundle. We will denote $N=\frac{r_u\times r_v}{|r_u\times r_v|}$ and call it the unit normal vector field, or Gauss map; it represents a map $N:D\rightarrow{S^2}$.

\begin{definition}
We will call the following quadratic form
\begin{equation*}
ds^{2}(u,v)=<r_u,r_u>du^{2}+2<r_{u},r_{v}>dudv+<r_v,r_v>dv^{2}
\end{equation*}
a {\it naturally induced metric}, or {\it first fundamental form}.
\end{definition}

Recall that the notion of induced metric came from a bilinear map that acts as follows: $I (v_p, w_p) = <v_p, w_p>$, where the vectors $v_p$ and $w_p$ represent tangent vectors to the surface at the point $p$.
Also recall that, locally, every smooth surface can be represented as a graph (Monge chart). In general, we will not work with Monge charts, but with generic immersions.

If we define
$E=|r_u|^2$, $F=<r_u,r_v>$, $G=|r_v|^2$, then we can identify the naturally induced metric (2-form)
\begin{equation*}
 ds^2=E du^2+2F dudv+ G dv^2, 
\end{equation*}
with a symmetric and positive definite real-valued matrix.

As it is very well known, the notations above are classical, and were first used by Gauss.

\begin{definition} For a local immersion in $\mathbb{R}^3$, with image $M=r(D)$, let $N$ represent the Gauss map. For a tangent vector $v=v_p$ to $M$ at $p$, we put

$S_p(v) = - dN_p(v) = - D_v N(p)$.

Then $S_p$, viewed as a linear map from the tangent plane to itself, defines the shape operator $S$ at every point.

We define the normal curvature as

$$\kappa_n (v_p) = \frac{<S(v_p), v_p>}{<v_p, v_p>}.$$

Clearly, if $v_p$ is unitary, then the denominator is equal to 1.

\end{definition}

Note that, if the actual immersion a surface is unknown and only the first fundamental form is given, then we do not possess much information about the geometric properties of the surface. By a classical result of Bonnet, the geometry of surfaces exclusively depends on two quadratic differential forms. One of them is the metric, and the other is the second fundamental form.

\begin{definition}

The second fundamental form of a regular surface $M$ in $\mathbb{R}^3$ represents a symmetric and bilinear form on the tangent plane to $M$ at the point $p$ given by:

$II(v_p, w_p) = < S(v_p), w_p >$
for any arbitrary tangent vectors on the tangent plane at $p$.

\end{definition}

Another classical notation of the second fundamental form that is due to Gauss is:
\begin{equation*}
II: =ldu^2+2mdudv+ndv^2,
\end{equation*}
where the coefficients $l$, $m$, $n$ are given by: $l=<r_{uu}, N>$, $m=<r_{uv}, N>$, $n=<r_{vv}, N>$.

Unlike the first fundamental form, the second fundamental form is not necessarily positive definite. However, it is a symmetric bilinear form. Since all directions in a tangent plane $T_pM$ form a compact set homeomorphic to a circle, $\kappa$ has at least one minimum and one maximum, i.e., at least two extremal values. The maximum normal curvature $\kappa_1$ and the minimum normal curvature $\kappa_2$ are called principal curvatures. They represent the eigenvalues of the shape operator, and the eigenvectors corresponding to them are called principal directions.

\begin{definition}
 The average and the product of the two principal curvatures are called mean curvature and Gaussian curvature, respectively.
\end{definition}

In the classical literature, surfaces whose mean curvature (respectively, Gaussian curvature) is a constant have been studied extensively. Constant mean curvature (CMC) surfaces represent a special subclass of Bonnet surfaces, which in their turn represent an important subclass of isothermic surfaces.

\begin{definition}
If the induced metric is a (nonconstant) multiple of the flat metric, then the immersion is said to be {\it conformal}, or {\it isothermal}, or {\it in conformal coordinates}, i.e. $\left\|r_{u}\right\|=\left\|r_{v}\right\|$ and $<r_u,r_v>=0$ at every point.

\

A surface is said to be (parameterized) {\it in curvature line coordinates}, if both first and second fundamental forms are diagonal.

\

A surface in $\mathbb{R}^3$ is said to be parameterized in {\it isothermic coordinates} if the given parameterization is conformal and in curvature line coordinates at the same time, i.e. it is given as an immersion $f:D\subset{R^2}\longrightarrow{R^3}$ such that $$<f_{u},f_{v}>=0,$$ and
\begin{equation*}
\left\|f_{u}\right\|^{2}=\left\|f_v\right\|^{2}=E(u,v),
\end{equation*}

for a smooth function over $D$, and $<f_{uv},N>=0$.

\end{definition}

Remark that obtaining a parameterization that is both isothermal and in curvature line coordinates at the same time is not possible on {\bf most} regular surfaces.

Also note that at umbilic points (i.e., points where the normal curvature is the same in every direction), the actual choice of an orthogonal system of eigenvectors becomes problematic. Therefore, we usually view all the umbilics as singularities. Clearly, this point of view makes the sphere a very special example of isothermic surface!

Assume that we are dealing with a regular surface in $\mathbb R^3$ which admits isothermic coordinates (away from a discrete set of singularities). We will call such a surface an {\it isothermic surface}.

Isothermic surfaces were extensively studied in \cite{Chen}, where the main interest was to study Bonnet surfaces as a particular case of isothermic surfaces.

It is important to note that \cite{Chen} mentioned the following important characteristic property of isothermic surfaces:

{\it $M$ is an isothermic surface in $\mathbb R^3$ if and only if locally there exists a conformal parameter $
z=u+iv$ such that the Hopf differential coefficient, $Q=Q(z,\bar z)$ is a real-valued function.}

This characteristic property (in terms of $Q$ being real) is obvious from the definition of an isothermic surface, together with the formula of $Q$ in terms of $l$, $m$ and $n$.

A more comprehensive characterization of isothermic surfaces was provided by Bobenko in \cite{BO}:

\begin{lemma}
Let $f:D\longrightarrow{R^3}$ be a conformal immersion of an umbilic-free surface. The surface is isothermic if and only if there exists a holomorphic non-vanishing differential $f(z) dz^2$ on $D$ and a function $q$ from $D$ to $R_+$ such that the Hopf differential is of the form:
       $ Q(z,\bar z)= f(z) q(z,\bar z).$
It is easy to see that, in this case, $w = \int {\sqrt {f(z)}} dz$ is an isothermic coordinate.

\end{lemma}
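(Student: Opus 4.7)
\medskip
\noindent\textbf{Sketch of proof strategy.}

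The plan is to reduce the statement to the characterization stated immediately before the lemma---that a conformally parameterized umbilic-free surface is isothermic exactly when some conformal parameter makes the Hopf differential coefficient $Q$ real-valued---and to use the explicit change $w=\int\sqrt{f(z)}\,dz$ to translate between the data $(f,q)$ and such a distinguished parameter. The engine of the argument is the transformation law for the Hopf differential under a holomorphic change of conformal coordinates $z\mapsto w(z)$: since $Q\,dz^2$ is a quadratic differential,
\begin{equation*}
Q_z(z,\bar z)\,dz^2 = Q_w(w,\bar w)\,dw^2, \qquad \text{i.e.}\qquad Q_z = Q_w\,(w'(z))^2.
\end{equation*}
I would record this once and invoke it in both directions.

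For the ($\Leftarrow$) implication, assume $Q_z=f(z)\,q(z,\bar z)$ with $f$ holomorphic and non-vanishing and $q:D\to\mathbb{R}_+$. Because $D$ is simply connected and $f$ is nowhere zero, a single-valued holomorphic branch of $\sqrt{f}$ exists, and $w(z):=\int\sqrt{f(z)}\,dz$ is then a local biholomorphism with $w'(z)=\sqrt{f(z)}$. It is therefore a valid conformal reparameterization, and the transformation law yields $Q_w = Q_z/(w'(z))^2 = q(z,\bar z)\in\mathbb{R}$. By the cited characterization, the surface is isothermic and $w$ is an isothermic coordinate, giving the formula at the end of the statement.

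For the ($\Rightarrow$) implication, assume the surface is isothermic, so that some conformal parameter $w$ exists in which $Q_w$ is real. The transition $z\mapsto w(z)$ between two conformal charts is holomorphic and locally biholomorphic, so set $f(z):=(w'(z))^2$, which is holomorphic and non-vanishing, and $q(z,\bar z):=Q_w(w(z),\overline{w(z)})$, which is real by hypothesis. The transformation law gives $Q_z=f(z)\,q$. Umbilic-freeness forces $Q_z\ne 0$, hence $q\ne 0$ on the connected domain $D$, so $q$ has constant sign; if $q<0$, replace $(f,q)$ by $(-f,-q)$ to land in $\mathbb{R}_+$ while preserving holomorphy and non-vanishing of $f$.

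The main subtlety is not analytic but a matter of bookkeeping: ensuring the global existence of a single-valued holomorphic square root of $f$ (which is where simple connectedness of $D$, together with $f\ne 0$, is essential), correctly applying the transformation rule $Q_z=Q_w (w')^2$ under the holomorphic change of variables, and absorbing the sign of $q$ into $f$ so that $q$ is genuinely positive as in the statement. Everything else is an immediate appeal to the preceding characterization of isothermic surfaces in terms of realness of $Q$.
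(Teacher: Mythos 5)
The paper offers no proof of this lemma at all: it is quoted from Bobenko's lectures as a known characterization, so there is no internal argument to compare yours against. Your proposal is a correct, self-contained proof and is the standard one. Reducing both implications to the Chen-type criterion stated just above the lemma ($Q$ real-valued in some conformal parameter) via the transformation law $Q_z = Q_w\,(w'(z))^2$ of the Hopf differential under a holomorphic change of conformal coordinates is exactly the mechanism that makes the closing formula $w=\int\sqrt{f(z)}\,dz$ work, and you correctly close the two points most often left implicit: the existence of a single-valued branch of $\sqrt{f}$ (simple connectedness of $D$ plus $f\neq 0$) and the positivity of $q$ (umbilic-freeness forces $Q\neq 0$, hence $q$ has constant sign on the connected domain, and a negative sign can be absorbed into $f$ without destroying holomorphy or non-vanishing). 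Two minor points you should still record. First, in the forward direction the transition between two conformal charts is holomorphic only if they induce the same orientation; otherwise replace $w$ by $\bar w$, which is again an isothermic coordinate, before setting $f=(w')^2$. Second, the isothermic parameter $w$ produced by the criterion is a priori only local, while the lemma asserts $f$ and $q$ on all of $D$; strictly one should patch the locally defined quadratic differentials $dw^2$ over $D$ (they agree up to a positive constant and a sign, which the normalization $q>0$ and simple connectedness let you fix). The paper, like Bobenko's original statement, glosses over this local-versus-global step, so your treatment is at least as careful as the source it cites.
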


We arrived at the following question, which is important to answer in this work, from both a theoretical and a constructive (applicative) point of view:

{\it Question: ``Starting from a arbitrarily given immersion  $(x,y)\mapsto f(x,y)$ of an isothermic surface in the Euclidean 3-space, is there a simple method of obtaining an isothermic parameterization $(\beta, \gamma)\mapsto f(\beta, \gamma)$ corresponding to it?''}

\

We implemented a key idea for changing the old coordinates to the isothermic ones, which can be summarized in two steps:

\

a). applying the Gram-Schmidt orthogonalization;

and

b). multiplying the velocity vectors by a `non-constant scaling function' $K=K(x,y)$, namely a class $C^2$ non-constant function specific to this chapter (not to be confused with the Gaussian curvature).

Corresponding to this method, we obtained a condition that this function, $K$, must satisfy, and we constructed isothermic coordinates from an arbitrary immersion, under the above-stated condition.

Solving a Bonnet problem gives rise to a generic immersion, which is rarely isothermic; constructing an isothermic parameterization for the same physical surface, whenever possible, presents a great deal of simplification.

An isothermic parameterization creates a special mesh that is desirable in some applications, such as:

\begin{itemize}

\item applying geometric methods to mesh discretization which are both homogeneous and orthogonal;
\item in structural mechanics, deriving the equation of shells along the principal directions;
\item in architecture/industrial design, working with orthogonal meshes is often preferrable, in order to minimize the production cost;
\item finding dual surfaces (Christoffel transforms).
\end{itemize}

\newpage

\begin{theorem}
Let $f(x,y)=( f^1(x,y),f^2(x,y),f^3(x,y))$ be an {\bf arbitrary} immersion, with velocities $f_x$ and $f_y$, of a regular surface $M=f(D)$, where $D$ represents an open and simply connected domain. Let us define the vector distributions
\begin{eqnarray}
&& f_1:=K(x,y)\left(\frac{f_x}{\sqrt{E}}\cos(\alpha)+\frac{{E}f_y-{F}f_x}{{\sqrt {E}}{\sqrt{ EG-F^2}}}\sin(\alpha)\right), \label{ffgambet} \\
&& f_2:=K(x,y)\left(-\frac{f_x}{\sqrt{E}}\sin(\alpha)+\frac{{E}f_y-{F}f_x}{{\sqrt {E}}{\sqrt{ EG-F^2}}}\cos(\alpha)\right),
\label{ffgambeta}
\end{eqnarray}
where the function $K(x,y)$ is positive and smooth, and the angle $\alpha$ satisfies the following equation (in terms of coefficients of the first and second fundamental forms corresponding to f):

\begin{equation}
\tan(2\alpha)=-2\left( {\frac{(-F l+ Em){\sqrt{EG-F^2}}} {(2{F^2}-E G)l-2 E F m +{E^2} n}}\right). \label{alphann}
\end{equation}

 Further assume that there exists a change of coordinate map $(x,y)\mapsto(\beta,\gamma)$ such that the given vectors $f_1$ and $f_2$ respectively represent the partial velocities $f_{\gamma}$, $f_{\beta}$ of $f$.

{\bf In these assumptions, the new coordinates $(\beta, \gamma)$ are isothermic {\it if and only if} the corresponding smooth function $K$, with respect to the two different charts, will satisfy the following equations:}

\begin{eqnarray}
&& K_\gamma=K^2\left( \frac{\frac{F^2}{E^2}E_x-2\frac{F}{E}F_x+G_x}{2(G-\frac{F^2}{E})\sqrt{E}}\cos(\alpha)-\frac{-E_y-\frac{F}{E}E_x+2F_x}{2E\sqrt{G-\frac{F^2}{E}}}\sin(\alpha) \right) \nonumber\\
&& \quad +K^2\left(-\frac{\alpha_x}{\sqrt{E}}\sin(\alpha)+\frac{\alpha_y-\frac{F}{E}\alpha_x}{\sqrt{G-\frac{F^2}{E}}}\cos(\alpha) \right),  \label{condk}\\
&& K_\beta=K^2\left( -\frac{\frac{F^2}{E^2}E_x-2\frac{F}{E}F_x+G_x}{2(G-\frac{F^2}{E})\sqrt{E}}\sin(\alpha)-\frac{-E_y-\frac{F}{E}E_x+2F_x}{2E\sqrt{G-\frac{F^2}{E}}}\cos(\alpha) \right) \nonumber \\
&&\quad +K^2\left(-\frac{\alpha_x}{\sqrt{E}}\cos(\alpha) -\frac{\alpha_y-\frac{F}{E}\alpha_x}{\sqrt{G-\frac{F^2}{E}}}\sin(\alpha) \right).  \label{condka}
\end{eqnarray}

\end{theorem}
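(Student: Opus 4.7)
The argument splits into an algebraic part (conformality and alignment with principal directions) and an analytic part (the integrability constraint on $K$ encoded in (1.4)--(1.5)).

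First I would observe the identity hidden in (1.1)--(1.2). Setting
\[ e_1 := \frac{f_x}{\sqrt{E}}, \qquad e_2 := \frac{E f_y - F f_x}{\sqrt{E}\sqrt{EG - F^2}}, \]
the pair $\{e_1, e_2\}$ is the Gram--Schmidt orthonormalization of $(f_x, f_y)$. Formulas (1.1)--(1.2) then read $f_1 = K\tilde e_1$, $f_2 = K\tilde e_2$, where $\tilde e_1 := \cos\alpha\, e_1 + \sin\alpha\, e_2$ and $\tilde e_2 := -\sin\alpha\, e_1 + \cos\alpha\, e_2$ are the rotation of $\{e_1,e_2\}$ by angle $\alpha$. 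In particular, $\{\tilde e_1, \tilde e_2\}$ is orthonormal, so $|f_1|^2 = |f_2|^2 = K^2$ and $\langle f_1, f_2\rangle = 0$; under the assumed chart the new first fundamental form is $K^2(d\beta^2 + d\gamma^2)$, which is automatically conformal. I would also verify that (1.3) is precisely the condition for $\{\tilde e_1, \tilde e_2\}$ to diagonalize the shape operator: writing $II$ in the basis $\{e_1, e_2\}$ in terms of $l, m, n, E, F, G$ and rotating by $\alpha$, the off-diagonal entry becomes $\tfrac12\sin(2\alpha)(II_{22} - II_{11}) + \cos(2\alpha)II_{12}$, and setting this to zero reproduces (1.3) after direct simplification. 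Together these two observations show that whenever the chart exists, both fundamental forms are diagonal in it, so the chart is automatically isothermic.

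The substantive content is (1.4)--(1.5), which I would derive from Cartan's structure equations. Let $\theta^1, \theta^2$ be the coframe dual to $(e_1, e_2)$ and $\omega_{12}$ the Levi--Civita connection $1$-form, determined by $d\theta^1 = \omega_{12}\wedge\theta^2$ and $d\theta^2 = -\omega_{12}\wedge\theta^1$. A short computation with covariant derivatives gives $\tilde\omega_{12} = \omega_{12} + d\alpha$ for the rotated coframe $(\tilde\theta^1, \tilde\theta^2)$ dual to $(\tilde e_1, \tilde e_2)$. The existence of the chart with $\partial_\gamma = K\tilde e_1$ and $\partial_\beta = K\tilde e_2$ is equivalent to the closedness of the dual 1-forms $\tilde\theta^1/K$ and $\tilde\theta^2/K$. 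Writing $\tilde\omega_{12} = p\,\tilde\theta^1 + q\,\tilde\theta^2$, a direct application of the Leibniz rule together with the rotated structure equations reduces closedness to
\[ K_\beta = -K^2 p, \qquad K_\gamma = K^2 q, \]
which is exactly the shape of (1.4)--(1.5). The overall $K^2$ factor visible in the theorem arises precisely from the $1/K$ normalization of the 1-forms.

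What remains is a bookkeeping computation: read off $\omega_{12}$ from $d\theta^1, d\theta^2$ in the $(dx, dy)$ basis, rewrite $d\alpha = \alpha_x\, dx + \alpha_y\, dy$ in the $(\tilde\theta^1, \tilde\theta^2)$ basis, and pair against $\tilde\theta^1, \tilde\theta^2$. The combinations $\frac{F^2}{E^2}E_x - 2\frac{F}{E}F_x + G_x$ and $-E_y - \frac{F}{E}E_x + 2F_x$, together with the $\alpha_x, \alpha_y$ terms, emerge in the positions displayed in (1.4)--(1.5). I expect this to be the main obstacle: not conceptually deep, but tracking signs through the rotation and rearranging derivatives of $E, F, G$ into the asymmetric form the authors chose is lengthy. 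Conversely, once these identities are established, any positive smooth $K$ satisfying (1.4)--(1.5) makes $\tilde\theta^1/K, \tilde\theta^2/K$ closed, so the coordinate change exists; combined with the first two steps, the resulting chart is automatically isothermic, which gives the converse direction of the equivalence.
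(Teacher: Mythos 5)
Your strategy is sound and reaches the same four ingredients as the paper's proof, but it organizes the decisive step quite differently. For conformality and orthogonality you and the paper do essentially the same thing: both recognize \eqref{ffgambet}--\eqref{ffgambeta} as a rotation by $\alpha$ of the Gram--Schmidt frame scaled by $K$ (your $e_1,e_2$ are the paper's $D_1,D_2$). For the curvature-line condition the paper computes $\langle (f_\gamma)_\beta, N\rangle$ directly by differentiating the frame and substituting \eqref{alphann}; your version --- rotating the matrix of $II$ in the orthonormal basis and killing the off-diagonal entry $\tfrac12\sin(2\alpha)(II_{22}-II_{11})+\cos(2\alpha)II_{12}$ --- is equivalent and cleaner, and I checked that it does reproduce \eqref{alphann} exactly. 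The genuine divergence is in deriving \eqref{condk}--\eqref{condka}: the paper expands $(f_\gamma)_\beta-(f_\beta)_\gamma$ in coordinates, collects the coefficients of the linearly independent vectors $D_1, D_2$, sets both to zero, and solves the resulting linear system for $K_\gamma, K_\beta$; you instead phrase the integrability as closedness of the rescaled coframe $\tilde\theta^1/K,\ \tilde\theta^2/K$ and use $\tilde\omega_{12}=\omega_{12}+d\alpha$ to get $K_\beta=-K^2p$, $K_\gamma=K^2q$ with $\tilde\omega_{12}=p\,\tilde\theta^1+q\,\tilde\theta^2$. Your route explains conceptually why the system has the form it does (the $K^2$ from the $1/K$ normalization, the $d\alpha$ contribution producing the $\alpha_x,\alpha_y$ terms, the connection form producing the $E,F,G$-derivative combinations) and why the hypothesis that the chart exists is really an integrability statement; the paper's route has the advantage that the specific right-hand sides of \eqref{condk}--\eqref{condka} drop out of the computation with no further translation. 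The one thing you have not done is carry out the ``bookkeeping'' that identifies your $p$ and $q$ with the explicit combinations $\frac{\frac{F^2}{E^2}E_x-2\frac{F}{E}F_x+G_x}{2(G-\frac{F^2}{E})\sqrt{E}}$ and $\frac{-E_y-\frac{F}{E}E_x+2F_x}{2E\sqrt{G-\frac{F^2}{E}}}$; since those explicit formulas are the actual content of the theorem's conclusion, a complete write-up would need that verification, but the framework you set up will deliver it.
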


\newpage

{\bf Remarks} about the previous theorem:

A). Note that the geometric meaning of $\alpha(x,y)$ is of a rotation angle that changes at every point (of a pre-orthonormalized frame after Gramm-Schmitt); and the geometric meaning of $K(x,y)$ is of a non-constant dilating (scaling) function of the unit tangent vectors, up to a pair of partial velocity vectors in isothermic coordinates at each point on the surface (both of magnitude $K(x,y)$).

B). Consider the fractional expression of $\tan (2\alpha)$. If the initial parameterization would be an isothermic one, one can easily check that both the numerator and the denominator would become {\bf zero} at umbilic points. For umbilics, the expression would then be undetermined, and not undefined. 
In the following  statement we assume that we are always away from umbilic points. 
However, even for the case of isolated umbilic points, the discrete algorithm allows a certain flexibility in choosing the size steps in such a way that the umbilic points can be avoided.

C). The formulas above provide a simple method to reparameterize to isothermic coordinates. We are not claiming that this is the only method.

D). In this theorem, equations \eqref{ffgambet}-\eqref{ffgambeta} and \eqref{condk}-\eqref{condk} assure the compatibility condition $(f_1)_\beta=(f_2)_\gamma$, which will become: $(f_{\gamma})_\beta=(f_{\beta})_\gamma$.

\begin{proof}

Note that our long expressions in the proof will not be simplified to the maximum extent, due to our willingness to show all the computations exactly as they were successively performed.

By the definition of \textbf{isothermic} coordinates, the vectors $f_1=f_{\gamma}$ and $f_2=f_{\beta}$ represent the partial velocity vector fields of an isothermic parametrization $(\beta, \gamma)$ if and only if they satisfy the following conditions simultaneously:
\begin{enumerate}
 \item $||f_{\gamma}||=||f_{\beta}||$,
 \item $<f_{\gamma},f_{\beta}>=0$,
 \item $<f_{\gamma \beta},N>=0$.
 \item $(f_{\gamma})_\beta=(f_{\beta})_\gamma.$
\end{enumerate}
\begin{enumerate}
 \item It is easy to rewrite the first condition in an equivalent way. Specifically, we obtain
{\footnotesize
\begin{eqnarray*}
&& ||f_{\gamma}||^2=<f_{\gamma},f_{\gamma}>=K^2\left( \frac{||f_x||^2}{E}\cos^2(\alpha)+2\frac{<f_x,f_y>-\frac{F}{E}||f_x||^2}{\sqrt{E}\sqrt{G-\frac{F^2}{E}}}\cos(\alpha)\sin(\alpha) \right) \\
&& \quad +K^2 \left( \frac{||f_y||^2+\frac{F^2}{E^2}||f_x||^2}{G-\frac{F^2}{E}}\sin^2(\alpha) \right) \\
&& =K^2,
\end{eqnarray*}
}
and
{\footnotesize
\begin{eqnarray*}
&& ||f_{\beta}||^2=<f_{\beta},f_{\beta}>=K^2\left( \frac{||f_x||^2}{E}\sin^2(\alpha)-2\frac{<f_x,f_y>-\frac{F}{E}||f_x||^2}{\sqrt{E}\sqrt{G-\frac{F^2}{E}}}\cos(\alpha)\sin(\alpha) \right) \\
&& \quad +K^2 \left( \frac{||f_y||^2+\frac{F^2}{E^2}||f_x||^2}{G-\frac{F^2}{E}}\cos^2(\alpha) \right) \\
&&  =K^2.
\end{eqnarray*}
}

Note that the $||f_{\gamma}||=||f_{\beta}||=K$ is an immediate consequence of the definition of the initial vector field distribution $f_1, f_2$.

\item The second condition is also immediately implied by the two vector fields provided
{\footnotesize
\begin{eqnarray*}
&& <f_{\gamma},f_{\beta}>=K^2\left( -\frac{||f_x||^2}{E}\cos(\alpha)\sin(\alpha)+\frac{<f_x,f_y>-\frac{F}{E}||f_x||^2}{\sqrt{E}\sqrt{G-\frac{F^2}{E}}}\cos^2(\alpha) \right) \\
&& \;+K^2 \left(-\frac{<f_x,f_y>-\frac{F}{E}||f_x||^2}{\sqrt{E}\sqrt{G-\frac{F^2}{E}}}\sin^2(\alpha) \right) \\
&& \quad+K^2\left(\frac{||f_y||^2-2\frac{F}{E}<f_y,f_x>+\frac{F^2}{E^2}||f_x||^2}{G-\frac{F^2}{E}}\sin(\alpha)\cos(\alpha)\right) \\
&& =0.
\end{eqnarray*}
}

\item We will show that even the third condition is also implied by the vector construction in the theorem hypothesis.

 In order to reformulate the third condition, let  $D_1(x,y)=\frac{f_x}{\sqrt{E}}$ and $D_2(x,y)=\frac{f_y-\frac{F}{E}f_x}{\sqrt{G-\frac{F^2}{E}}}$, for simplification.

Therefore, $f_{\gamma}$ and $f_{\beta}$ will become
{\footnotesize
\begin{eqnarray}
&& f_\gamma:=K\left(D_1 \cos(\alpha)+D_2 \sin(\alpha)\right), \label{fgbd} \\
&&f_\beta:=K\left(-D_1 \sin(\alpha)+D_2 \cos(\alpha)\right).\label{fgbda}
\end{eqnarray}
}
For the last identity, consider
{\footnotesize
\begin{eqnarray}
&& <(f_{\gamma})_\beta,N>= <K_\beta(D_1\cos(\alpha)+D_2\sin(\alpha))+K^2 (-\frac{\frac{\partial }{\partial x}D_1}{\sqrt{E}}\sin(\alpha)\cos(\alpha) \nonumber\\
&&\; -\frac{\frac{\partial }{\partial x}D_2}{\sqrt{E}}\sin^2(\alpha)+\frac{\frac{\partial }{\partial y}D_1}{\sqrt{G-\frac{F^2}{E}}}\cos^2(\alpha)+\frac{\frac{\partial }{\partial y}D_2}{\sqrt{G-\frac{F^2}{E}}}\sin(\alpha)\cos(\alpha) \nonumber \\
&& \quad-\frac{F}{E}\frac{\frac{\partial }{\partial x}D_1}{\sqrt{G-\frac{F^2}{E}}}\cos^2(\alpha)+\frac{D_1 \alpha_x}{\sqrt{E}}\sin^2(\alpha)-\frac{F}{E}\frac{\frac{\partial }{\partial x}D_2}{\sqrt{G-\frac{F^2}{E}}}\sin(\alpha)\cos(\alpha) \nonumber \\
&& \quad\;-\frac{D_2 \alpha_x}{\sqrt{E}}\sin(\alpha)\cos(\alpha)-\frac{D_1 \alpha_y}{\sqrt{G-\frac{F^2}{E}}}\sin(\alpha)\cos(\alpha)+\frac{D_2 \alpha_y}{\sqrt{G-\frac{F^2}{E}}}\cos^2(\alpha)\nonumber \\
&& \qquad+\frac{F}{E}\frac{D_1 \alpha_x}{\sqrt{G-\frac{F^2}{E}}}\sin(\alpha)\cos(\alpha)-\frac{F}{E}\frac{D_2 \alpha_x}{\sqrt{G-\frac{F^2}{E}}}\cos^2(\alpha) ),N > . \label{fgbN}
\end{eqnarray}
}
Since $<f_x,N>=0$ and $<f_y,N>=0$, then all terms with $<D_1,N>$ and $<D_2,N>$ will be zero; replacing the partial derivatives of $D_1$ and $D_2$ into the equation \eqref{fgbN} we obtain
{\footnotesize
\begin{eqnarray}
&& <(f_{\gamma})_\beta,N>=\left(\frac{-\frac{F}{E}}{\sqrt{G-\frac{F^2}{E}}\sqrt{E}}(\cos^2(\alpha)-\sin^2(\alpha))\right)<f_{xx},N> \nonumber\\
&& \,+\left(\frac{2\frac{F^2}{E}-G}{E\left(G-\frac{F^2}{E}\right)}\sin(\alpha)\cos(\alpha) \right)<f_{xx},N> \nonumber \\
&& \;+\left(\frac{1}{\sqrt{G-\frac{F^2}{E}} \sqrt{E}}(\cos^2(\alpha)-\sin^2(\alpha))-2 \frac{\frac{F}{E}}{G-\frac{F^2}{E}}\sin(\alpha)\cos(\alpha) \right)<f_{xy},N> \nonumber \\
&& \;\,+\left( \frac{1}{G-\frac{F^2}{E}}\cos(\alpha)\sin(\alpha) \right)<f_{yy},N> \nonumber \\
&& =\left(\frac{-\frac{F}{E}}{\sqrt{G-\frac{F^2}{E}}\sqrt{E}}l+\frac{1}{\sqrt{G-\frac{F^2}{E}} \sqrt{E}}m\right)\cos(2\alpha) \nonumber \\
&&\; +\left(\frac{2\frac{F^2}{E}-G}{E\left(G-\frac{F^2}{E}\right)}l-2 \frac{\frac{F}{E}}{G-\frac{F^2}{E}}m+\frac{1}{G-\frac{F^2}\;{E}}n \right) \frac{\sin(2\alpha)}{2}. \label{fgambet}
\end{eqnarray}
}
Now, if we rewrite the equation \eqref{alphann} as
{\footnotesize
\begin{equation*}
\sin(2\alpha)=-2\left( \frac{\frac{-\frac{F}{E}}{\sqrt(G-\frac{F^2}{E})\sqrt{E}}l+\frac{1}{\sqrt{\left(G-\frac{F^2}{E}\right)} \sqrt{E}}m}{\frac{2\frac{F^2}{E}-G}{E\left(G-\frac{F^2}{E}\right)}l-2 \frac{\frac{F}{E}}{G-\frac{F^2}{E}}m+\frac{1}{G-\frac{F^2}{E}}n}\right)\cos(2\alpha),
\end{equation*}
}
and plug it into the equation \eqref{fgambet}, then we obtain
{\footnotesize
\begin{eqnarray*}
&& <(f_{\gamma})_\beta,N>=\left(\frac{-\frac{F}{E}}{\sqrt{G-\frac{F^2}{E}}\sqrt{E}}l+\frac{1}{\sqrt{G-\frac{F^2}{E}} \sqrt{E}}m\right)\cos(2\alpha) \nonumber \\
&&\; -\left(\frac{2\frac{F^2}{E}-G}{E\left(G-\frac{F^2}{E}\right)}l-2 \frac{\frac{F}{E}}{G-\frac{F^2}{E}}m+\frac{1}{G-\frac{F^2}{E}}n \right) \nonumber \\
&&\quad \left( \frac{\frac{-\frac{F}{E}}{\sqrt{G-\frac{F^2}{E}}\sqrt{E}}l+\frac{1}{\sqrt{G-\frac{F^2}{E}} \sqrt{E}}m}{\frac{2\frac{F^2}{E}-G}{E\left(G-\frac{F^2}{E}\right)}l-2 \frac{\frac{F}{E}}{G-\frac{F^2}{E}}m+\frac{1}{G-\frac{F^2}{E}}n}\right)\cos(2\alpha)=0. \nonumber \\
\end{eqnarray*}
}
\item Last but not least, we will analyze the fourth condition in its equivalent forms, and show it is actually equivalent to the PDE system satisfied by the function $K$. Again, consider the equations \eqref{fgbd}-\eqref{fgbda}

{\footnotesize
\begin{eqnarray*}
&& (f_{\gamma})_\beta= K_\beta \left( D_1\cos(\alpha)+D_2\sin(\alpha)\right)+K\frac{\partial}{\partial \beta}\left(D_1\cos(\alpha)+D_2\sin(\alpha)\right)   \\
&& = K_\beta(D_1\cos(\alpha)+D_2\sin(\alpha))+K ( -\frac{\frac{\partial}{\partial x}( D_1\cos(\alpha)+D_2\sin(\alpha))}{\sqrt{E}}\sin(\alpha) \nonumber \\
&& \;+\frac{\frac{\partial}{\partial y}(D_1\cos(\alpha)+D_2\sin(\alpha))-\frac{F}{E} \frac{\partial}{\partial x}(D_1\cos(\alpha)+D_2\sin(\alpha))}{\sqrt{G-\frac{F^2}{E}}}\cos(\alpha) )  \\
&& = K_\beta(D_1\cos(\alpha)+D_2\sin(\alpha))+K^2 (-\frac{\frac{\partial }{\partial x}D_1}{\sqrt{E}}\sin(\alpha)\cos(\alpha)-\frac{\frac{\partial }{\partial x}D_2}{\sqrt{E}}\sin^2(\alpha)  \\
&& \; +\frac{\frac{\partial }{\partial y}D_1}{\sqrt{G-\frac{F^2}{E}}}\cos^2(\alpha)+\frac{\frac{\partial }{\partial y}D_2}{\sqrt{G-\frac{F^2}{E}}}\sin(\alpha)\cos(\alpha)-\frac{F}{E}\frac{\frac{\partial }{\partial x}D_1}{\sqrt{G-\frac{F^2}{E}}}\cos^2(\alpha)  \\
&& \quad +\frac{D_1 \alpha_x}{\sqrt{E}}\sin^2(\alpha)-\frac{F}{E}\frac{\frac{\partial }{\partial x}D_2}{\sqrt{G-\frac{F^2}{E}}}\sin(\alpha)\cos(\alpha)-\frac{D_2 \alpha_x}{\sqrt{E}}\sin(\alpha)\cos(\alpha)  \\
&& \quad\;-\frac{D_1 \alpha_y}{\sqrt{G-\frac{F^2}{E}}}\sin(\alpha)\cos(\alpha)+\frac{D_2 \alpha_y}{\sqrt{G-\frac{F^2}{E}}}\cos^2(\alpha)\\
&& \qquad +\frac{F}{E}\frac{D_1 \alpha_x}{\sqrt{G-\frac{F^2}{E}}}\sin(\alpha)\cos(\alpha)-\frac{F}{E}\frac{D_2 \alpha_x}{\sqrt{G-\frac{F^2}{E}}}\cos^2(\alpha) ), 
\end{eqnarray*}
}
{\footnotesize
\begin{eqnarray*}
 && (f_{\beta})_\gamma= K_\beta(-D_1\sin(\alpha)+D_2\cos(\alpha))+K\frac{\partial}{\partial \beta}(-D_1\sin(\alpha)+D_2\cos(\alpha))  \\
&& = K_\beta(-D_1\sin(\alpha)+D_2\cos(\alpha))+K (\frac{\frac{\partial}{\partial x}(-D_1\sin(\alpha)+D_2\cos(\alpha))}{\sqrt{E}}\cos(\alpha)  \\
&&\; +\frac{\frac{\partial}{\partial y}(-D_1\sin(\alpha)+D_2\cos(\alpha))-\frac{F}{E} \frac{\partial}{\partial x}(-D_1\sin(\alpha)+D_2\cos(\alpha))}{\sqrt{G-\frac{F^2}{E}}}\sin(\alpha) )\\
&& = K_\gamma(-D_1\sin(\alpha)+D_2\cos(\alpha))+K^2 (-\frac{\frac{\partial }{\partial x}D_1}{\sqrt{E}}\sin(\alpha)\cos(\alpha)+\frac{\frac{\partial }{\partial x}D_2}{\sqrt{E}}\cos^2(\alpha)  \\
&& \;-\frac{\frac{\partial }{\partial y}D_1}{\sqrt{G-\frac{F^2}{E}}}\sin^2(\alpha)+\frac{\frac{\partial }{\partial y}D_2}{\sqrt{G-\frac{F^2}{E}}}\sin(\alpha)\cos(\alpha)+\frac{F}{E}\frac{\frac{\partial }{\partial x}D_1}{\sqrt{G-\frac{F^2}{E}}}\sin^2(\alpha)  \\
&& \quad +\frac{D_1 \alpha_x}{\sqrt{E}}\sin^2(\alpha)-\frac{F}{E}\frac{\frac{\partial }{\partial x}D_2}{\sqrt{G-\frac{F^2}{E}}}\sin(\alpha)\cos(\alpha)-\frac{D_2 \alpha_x}{\sqrt{E}}\sin(\alpha)\cos(\alpha)  \\
&& \quad\;-\frac{D_1 \alpha_y}{\sqrt{G-\frac{F^2}{E}}}\sin(\alpha)\cos(\alpha)-\frac{D_2 \alpha_y}{\sqrt{G-\frac{F^2}{E}}}\sin^2(\alpha) \\
&& \qquad +\frac{F}{E}\frac{D_1 \alpha_x}{\sqrt{G-\frac{F^2}{E}}}\sin(\alpha)\cos(\alpha)+\frac{F}{E}\frac{D_2 \alpha_x}{\sqrt{G-\frac{F^2}{E}}}\sin^2(\alpha) ).
\end{eqnarray*}
}
The compatibility condition is rewritten as
{\footnotesize
\begin{eqnarray*}
&& 0=(f_{\gamma})_\beta-(f_{\beta})_\gamma=K_\beta(D_1\cos(\alpha)+D_2\sin(\alpha))-K_\gamma(-D_1\sin(\alpha)+D_2\cos(\alpha))  \nonumber \\
&& \;+K^2(-\frac{\frac{\partial }{\partial x}D_2}{\sqrt{E}}+\frac{\frac{\partial }{\partial y}D_1}{\sqrt{G-\frac{F^2}{E}}}-\frac{F}{E}\frac{\frac{\partial }{\partial x}D_1}{\sqrt{G-\frac{F^2}{E}}}+\frac{D_1 \alpha_x}{\sqrt{E}}+\frac{D_2 \alpha_y}{\sqrt{G-\frac{F^2}{E}}}-\frac{F}{E}\frac{D_2 \alpha_x}{\sqrt{G-\frac{F^2}{E}}}) \nonumber\\
&& =K_\beta(D_1\cos(\alpha)+D_2\sin(\alpha))-K_\gamma(-D_1\sin(\alpha)+D_2\cos(\alpha))  \nonumber \\
&& \;+K^2(-\frac{f_{yx}-\frac{F}{E}f_{xx}-\frac{\partial }{\partial x}(\frac{F}{E})f_x}{\sqrt{E}\sqrt{G-\frac{F^2}{E}}}+\frac{(f_y-\frac{F}{E}f_x)\frac{\partial }{\partial x}(\sqrt{G-\frac{F^2}{E}})}{(G-\frac{F^2}{E})\sqrt{E}} \nonumber \\
&&\quad+\frac{f_{xy}}{\sqrt{G-\frac{F^2}{E}}\sqrt{E}}-\frac{\frac{\partial }{\partial y}(\sqrt{E})f_x}{\sqrt{G-\frac{F^2}{E}}E}-\frac{F}{E}\frac{f_{xx}}{\sqrt{G-\frac{F^2}{E}}\sqrt{E}} \nonumber \\
&&\quad\;+\frac{D_1 \alpha_x}{\sqrt{E}}+\frac{D_2 \alpha_y}{\sqrt{G-\frac{F^2}{E}}}-\frac{F}{E}\frac{D_2 \alpha_x}{\sqrt{G-\frac{F^2}{E}}}). 
\end{eqnarray*}
}
After some simplifications, we get a rephrasing on the compatibility condition
$(f_\gamma)_\beta=(f_\beta)_\gamma$ in the equivalent form
{\footnotesize
\begin{eqnarray}
&& 0= (f_\gamma)_\beta-(f_\beta)_\gamma= \left(K_\beta \cos(\alpha)+K_\gamma \sin(\alpha)\right)D_1\nonumber \\
&&\; +K^2\left(\frac{\frac{\partial }{\partial x}(\frac{F}{E})}{\sqrt{G-\frac{F^2}{E}}}-\frac{\frac{\partial }{\partial y}(\sqrt{E})}{\sqrt{E} \sqrt{G-\frac{F^2}{E}}}+ \frac{F}{E}\frac{\frac{\partial }{\partial x}(\sqrt{E})}{\sqrt{E} \sqrt{G-\frac{F^2}{E}}}+\frac{\alpha_x}{\sqrt{E}}\right)D_1 \nonumber \\
&&\quad +\left(K_\beta \sin(\alpha)-K_\gamma \cos(\alpha)+K^2\left( \frac{\frac{\partial }{\partial x}(\sqrt{G-\frac{F^2}{E}})}{\sqrt{G-\frac{F^2}{E}}\sqrt{E}}+\frac{\alpha_y-\frac{F}{E}\alpha_x}{\sqrt{G-\frac{F^2}{E}}} \right)\right)D_2\nonumber \\
&& =\left(K_\beta \cos(\alpha)+K_\gamma \sin(\alpha)+K^2\left(\frac{-E_y-\frac{F}{E}E_x+2F_x}{2E\sqrt{G-\frac{F^2}{E}}}+\frac{\alpha_x}{\sqrt{E}} \right)\right)D_1 \nonumber \\
&&\quad +\left(K_\beta \sin(\alpha)-K_\gamma \cos(\alpha)+K^2\left( \frac{\frac{F^2}{E^2}E_x-2\frac{F}{E}F_x+G_x}{2(G-\frac{F^2}{E})\sqrt{E}} +\frac{\alpha_y-\frac{F}{E}\alpha_x}{\sqrt{G-\frac{F^2}{E}}}\right)\right)D_2. \nonumber \\ 
\end{eqnarray}
}
Since the compatibility condition above has to be satisfied at every point of the domain and since $D_1$ and $D_2$ are linearly independent, the condition 4) is equivalent to the following PDE system
{\footnotesize
\begin{equation}
K_\beta \cos(\alpha)+K_\gamma \sin(\alpha)=K^2\left(-\frac{-E_y-\frac{F}{E}E_x+2F_x}{2E\sqrt{G-\frac{F^2}{E}}}-\frac{\alpha_x}{\sqrt{E}}\right), \label{kkg}
\end{equation}
\begin{equation}
K_\beta \sin(\alpha)-K_\gamma \cos(\alpha)=K^2\left( -\frac{\frac{F^2}{E^2}E_x-2\frac{F}{E}F_x+G_x}{2(G-\frac{F^2}{E})\sqrt{E}} -\frac{\alpha_y-\frac{F}{E}\alpha_x}{\sqrt{G-\frac{F^2}{E}}} \right). \label{kkb}
\end{equation}
}
Remark however, that the previous system \eqref{kkg}, \eqref{kkb} in $K_{\beta}$ and $K_{\gamma}$ is compatible and has a unique solution, namely the PDE system \eqref{condk} satisfied by $K$ in the statement of the theorem.
\end{enumerate}
\end{proof}

On the construction algorithm of the isothermic parameterization $(\beta, \gamma)$ starting from the initial, arbitrary parameterization $(x,y)$, it becomes natural to write down the actual formulas which describe this transformation of coordinates (change of chart). The Jacobian of this change of chart can be easily computed, and is presented in the following Proposition.

\begin{corollary}
The Jacobian of the change of chart $(\beta,\gamma) \mapsto (x,y)$ has the following entries:
{\small
\begin{eqnarray}
&& x_{\gamma}=K\left(\frac{\cos(\alpha)}{\sqrt{E}}-\frac{F}{\sqrt{E}}\frac{\sin(\alpha)}{\sqrt{EG-{F^2}}}\right), \label{xbetagamma} \\
&& x_{\beta}=K\left(\frac{-\sin(\alpha)}{\sqrt{E}}-\frac{F}{\sqrt{E}}\frac{\cos(\alpha)}{\sqrt{EG-{F^2}}}\right), \label{xbetagammaa}
\end{eqnarray}
}
and
{\small
\begin{eqnarray}
&& y_{\gamma}=K\left(\frac{\sin(\alpha)}{\sqrt{G-\frac{F^2}{E}}}\right), \label{ybetagamma} \\
&& y_{\beta}=K\left(\frac{\cos(\alpha)}{\sqrt{G-\frac{F^2}{E}}}\right). \label{ybetagammaa}
\end{eqnarray}
}
Moreover, each of the PDE systems $(x,y)$ listed above, namely \eqref{xbetagamma}-\eqref{xbetagammaa} and \eqref{ybetagamma}-\eqref{ybetagammaa}, has a unique solution, given a choice of initial values $(x_0,y_0)$.
\end{corollary}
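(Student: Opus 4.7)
The plan is to treat the four Jacobian entries as a straightforward consequence of the chain rule, and then to reduce the unique-solvability claim to the vector compatibility already proved in the theorem.

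First, since the change of chart gives $f(\beta,\gamma)=f(x(\beta,\gamma),y(\beta,\gamma))$, the chain rule yields
\[ f_\gamma = x_\gamma\, f_x + y_\gamma\, f_y, \qquad f_\beta = x_\beta\, f_x + y_\beta\, f_y. \]
On the other hand, equations \eqref{ffgambet}--\eqref{ffgambeta} from the theorem express $f_\gamma$ and $f_\beta$ as explicit linear combinations of $f_x$ and $f_y$. Since $f$ is an immersion, $\{f_x,f_y\}$ is linearly independent at every point, so matching coefficients determines the four entries uniquely. The coefficient of $f_y$ in $f_\gamma$ is $KE\sin(\alpha)/(\sqrt E \sqrt{EG-F^2})$, which simplifies to $K\sin(\alpha)/\sqrt{G-F^2/E}$ and reproduces \eqref{ybetagamma}; a parallel simplification gives \eqref{ybetagammaa}. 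The coefficient of $f_x$ in $f_\gamma$ collects contributions from the $\cos(\alpha)$ summand and from the $-Ff_x/(\sqrt E\sqrt{EG-F^2})$ piece, and yields \eqref{xbetagamma}; similarly for \eqref{xbetagammaa}. This step is essentially algebraic bookkeeping and contains no new geometric content.

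For the unique-solvability assertion I would appeal to the Poincar\'e lemma on the simply connected domain. Each of the two pairs in \eqref{xbetagamma}--\eqref{xbetagammaa} and \eqref{ybetagamma}--\eqref{ybetagammaa} is a first-order overdetermined system for a single scalar unknown, and its Frobenius integrability condition is just Clairaut's identity $x_{\gamma\beta}=x_{\beta\gamma}$, respectively $y_{\gamma\beta}=y_{\beta\gamma}$. To extract these from the theorem I would differentiate the chain-rule identities once more and subtract, noting that the contributions $f_{xx}x_\gamma x_\beta$, $f_{yy}y_\gamma y_\beta$ and the mixed $f_{xy}$ terms are automatically symmetric in $(\beta,\gamma)$. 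This gives
\[ (f_\gamma)_\beta-(f_\beta)_\gamma = (x_{\gamma\beta}-x_{\beta\gamma})\,f_x + (y_{\gamma\beta}-y_{\beta\gamma})\,f_y. \]
The left side vanishes by the compatibility \eqref{condk}--\eqref{condka} proved in the theorem, and linear independence of $\{f_x,f_y\}$ then forces both scalar Clairaut identities. With integrability in hand and the domain simply connected, unique solvability from any prescribed $(x_0,y_0)$ follows at once.

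The only nontrivial point is verifying that the pure second-derivative contributions to $(f_\gamma)_\beta - (f_\beta)_\gamma$ really do cancel by symmetry, so that the vector compatibility decouples cleanly into two scalar Frobenius conditions. This is immediate from Clairaut applied to $f(x(\beta,\gamma),y(\beta,\gamma))$, but it is the logical hinge that allows the single vector compatibility of the theorem to underwrite both of the scalar PDE systems in the corollary.
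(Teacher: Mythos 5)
Your proposal is correct and follows essentially the same route as the paper: the Jacobian entries come from matching coefficients of $f_x,f_y$ in \eqref{ffgambet}--\eqref{ffgambeta} against the chain rule, and unique solvability follows because the scalar compatibility conditions $x_{\gamma\beta}=x_{\beta\gamma}$, $y_{\gamma\beta}=y_{\beta\gamma}$ are consequences of $f_{\gamma\beta}=f_{\beta\gamma}$ together with the linear independence of $f_x,f_y$ (the paper cites Ivey--Landsberg where you cite Frobenius/Poincar\'e, which is the same standard existence--uniqueness result). Your write-up merely makes explicit the cancellation of the second-derivative terms that the paper dismisses as ``straightforward to verify.''
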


\begin{proof}
It is straightforward to verify that the compatibility conditions, $x_{\gamma\beta}=x_{\beta\gamma}$ and $y_{\gamma\beta}=y_{\beta\gamma}$, are verified, because the condition 
$f_{\gamma\beta}=f_{\beta\gamma}$ is already fulfilled. Given the initial value $x(0,0)$ and $y(0,0)$ by results from \cite{Ivey}, the PDE system above has an unique solution represented 
by the chart $(x,y)$ which corresponds to $(\beta,\gamma)$.
\end{proof}

In the previous theorem, we proved that the existence of a certain scaling function $K(x,y)$ of arbitrarily given coordinates is necessary and sufficient in producing some new, isothermic coordinates. $K$ represents the solution of a system of differential equations, \eqref{condk}.
Moreover, we can provide a condition for such a scaling function $K$ to exist and be unique.

\begin{theorem}
As in the statement of the previous theorem, let $E,F,G$ and $l,m,n$ represent the coefficients of first and second fundamental forms of the arbitrary immersion $f(x,y)$, and let $\alpha$ be given by the same condition \eqref{alphann} as before. Assume the change of chart $(x,y)\mapsto(\beta, \gamma)$ is introduced in the same way as in the previous theorem. Let us now consider the PDE system as defined by \eqref{condk}.

The following condition
{\footnotesize
\begin{eqnarray} 
&& \left( \frac{\frac{\partial}{\partial y}\left(\frac{\frac{F^2}{E^2}E_x-2\frac{F}{E}F_x+G_x}{2(G-\frac{F^2}{E})\sqrt{E}}\right)-\frac{F}{E}\frac{\partial}{\partial x}\left(\frac{\frac{F^2}{E^2}E_x-2\frac{F}{E}F_x+G_x}{2(G-\frac{F^2}{E})\sqrt{E}}\right)}{\sqrt{G-\frac{F^2}{E}}}+\frac{\frac{\partial}{\partial x}\left(\frac{-E_y-\frac{F}{E}E_x+2F_x}{2E\sqrt{G-\frac{F^2}{E}}}\right)}{\sqrt{E}}+  \right. \nonumber  \\
&&\, \left(\frac{\frac{F^2}{E^2}E_x-2\frac{F}{E}F_x+G_x}{2(G-\frac{F^2}{E})\sqrt{E}}\right)\left(\frac{\alpha_x}{\sqrt{E}}\right)-\left(\frac{-E_y-\frac{F}{E}E_x+2F_x}{2E\sqrt{G-\frac{F^2}{E}}}\right)\left(\frac{\alpha_y-\frac{F}{E}\alpha_x}{\sqrt{G-\frac{F^2}{E}}}\right) \nonumber \\
&&\; -\left. \left(\frac{\frac{\partial}{\partial x}(\frac{\alpha_x}{\sqrt{E}})}{\sqrt{E}}+\left(\frac{\alpha_y-\frac{F}{E}\alpha_x}{\sqrt{G-\frac{F^2}{E}}}\right)\frac{\alpha_x}{\sqrt{E}}+\frac{\frac{\partial}{\partial y}(\frac{\alpha_y-\frac{F}{E}\alpha_x}{\sqrt{G-\frac{F^2}{E}}})}{\sqrt{G-\frac{F^2}{E}}}-\frac{F}{E}\frac{(\frac{\partial}{\partial x}(\frac{\alpha_y-\frac{F}{E}\alpha_x}{\sqrt{G-\frac{F^2}{E}}})+(\frac{\alpha_x}{\sqrt{E}})\alpha_x)}{\sqrt{G-\frac{F^2}{E}}}\right)   \right)=0, \nonumber \\ \label{NecSufCond}
\end{eqnarray}}
assures the existence of a function $K(x,y)$ that satisfies the system of differential equations \eqref{condk}. Moreover, if the solution $K$ exists, then it will be unique, given some appropriate initial value condition.

\end{theorem}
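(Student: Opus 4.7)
The plan is to recognize \eqref{condk} as a Pfaffian system $dK = K_\beta\,d\beta + K_\gamma\,d\gamma$ on the simply connected domain $D$, to which the Poincar\'e lemma applies provided the mixed-derivative identity $(K_\beta)_\gamma = (K_\gamma)_\beta$ holds. Abbreviating \eqref{condk} as $K_\beta = K^2\,P(x,y)$ and $K_\gamma = K^2\,Q(x,y)$, where $P$ and $Q$ are the trigonometric combinations of the $(E,F,G,\alpha)$-data appearing on the right-hand sides, the product rule yields $2KK_\gamma P + K^2\partial_\gamma P = 2KK_\beta Q + K^2\partial_\beta Q$. Substituting $K_\gamma = K^2 Q$ and $K_\beta = K^2 P$ into the left-hand and right-hand sides respectively cancels the cubic-in-$K$ contributions exactly, and positivity of $K$ reduces the whole condition to the purely geometric identity $\partial_\gamma P = \partial_\beta Q$, no longer involving $K$. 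Conceptually, the substitution $u = 1/K$ linearizes the system to $u_\beta = -P$, $u_\gamma = -Q$, whose integrability is precisely this identity.

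The second step is to express $\partial_\gamma P - \partial_\beta Q$ in the $(x,y)$-chart. From the Corollary,
\[
\partial_\gamma = K\!\left(\frac{\cos\alpha}{\sqrt{E}}\,\partial_x + \frac{\sin\alpha}{\sqrt{G - F^2/E}}\bigl(\partial_y - \tfrac{F}{E}\partial_x\bigr)\right),
\]
with an analogous formula for $\partial_\beta$ obtained by the replacement $(\cos\alpha,\sin\alpha)\mapsto(-\sin\alpha,\cos\alpha)$; these are the Gram--Schmidt-type derivations dual to the orthonormal frame used in the construction of the theorem. After substituting the explicit $P$ and $Q$ and applying the product rule to every trigonometric factor, the eight resulting terms per side split into mixed $\cos\alpha\sin\alpha$ contributions, which cancel pairwise, and $\cos^2\alpha / \sin^2\alpha$ contributions, which combine via the Pythagorean identity. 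What remains is an expression in $(x,y)$-derivatives of the coefficients of \eqref{condk}, together with cross terms coming from the derivatives of $\sin\alpha, \cos\alpha$; bookkeeping shows this is exactly \eqref{NecSufCond} after the common factor of $K$ is divided out.

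Given this integrability, existence of $K$ follows from the Poincar\'e lemma: the one-form $P\,d\beta + Q\,d\gamma$ is closed on the simply connected $D$ and therefore admits a primitive $-u$, and $K := 1/u$ solves \eqref{condk} by direct substitution. Uniqueness holds because prescribing $K(x_0,y_0) = K_0 > 0$ fixes the additive constant of $u$ and hence of $K$ on the connected component of $(x_0,y_0)$ where $u$ does not vanish; this is the sense in which an ``appropriate'' initial value determines $K$ uniquely, the restriction being necessary because the Riccati-type nonlinearity may cause blow-up.

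The principal obstacle is the algebraic verification of the middle step: tracking signs through the product rule applied to $\sin\alpha, \cos\alpha$ against the coefficient functions $P, Q$, and correctly identifying which rotation-weighted terms survive the Pythagorean collapse, is what produces the apparently asymmetric form of \eqref{NecSufCond}. The remaining ingredients --- the cubic-in-$K$ cancellation that reduces the compatibility to $\partial_\gamma P = \partial_\beta Q$, and the Poincar\'e-lemma conclusion --- are formal once this intermediate identity has been verified.
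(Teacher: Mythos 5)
Your proposal follows essentially the same route as the paper: the paper likewise linearizes the Riccati-type system via $u=1/K$ (equivalently, the cancellation of the cubic-in-$K$ terms you describe), pushes the compatibility condition $(1/K)_{\gamma\beta}=(1/K)_{\beta\gamma}$ through the chain rule into the $(x,y)$-chart using the same Jacobian/frame decomposition, and after the same $\sin(\alpha)\cos(\alpha)$ cancellations and Pythagorean collapse divides out the common factor of $K$ to arrive at \eqref{NecSufCond}. The only difference is one of execution rather than approach: the algebraic bookkeeping you defer is organized in the paper via the auxiliary quantities $P_1,P_2,Q_1,Q_2$ and $W_1(\alpha)-W_2(\alpha)$ and constitutes the bulk of its written proof, and your description of which terms survive matches what actually happens there.
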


\

{\bf Important Remark}. The uniqueness of $K$ that satisfies the PDE system \eqref{condk} is natural, {\bf up to}  the following:
\begin{itemize}
 \item [a)]roto-translations (Euclidean motions) - which can be taken care of by setting a base point and an initial moving frame;
\item [b)]constant multiplications of $K$ (similarity transformations);
\item [c)]four possible values of $\alpha$ which all satisfy \eqref{alphann} (one for each quadrant: meaning that we will have to choose only one, when we perform the construction algorithm for isothermic coordinates).
\end{itemize}

\begin{proof}
Consider the system
{\footnotesize
\begin{eqnarray*}
&& K_\gamma=-K^2\left(\frac{\frac{\partial }{\partial x}(\frac{F}{E})}{\sqrt{G-\frac{F^2}{E}}}-\frac{\frac{\partial }{\partial y}(\sqrt{E})}{\sqrt{E} \sqrt{G-\frac{F^2}{E}}}+ \frac{F}{E}\frac{\frac{\partial }{\partial x}(\sqrt{E})}{\sqrt{E} \sqrt{G-\frac{F^2}{E}}}+\frac{\alpha_x}{\sqrt{E}}\right)\sin(\alpha) \nonumber \\
&&\; +K^2\left(\frac{\frac{\partial }{\partial x}(\sqrt{G-\frac{F^2}{E}})}{\sqrt{G-\frac{F^2}{E}}\sqrt{E}}+\frac{\alpha_y-\frac{F}{E}\alpha_x}{\sqrt{G-\frac{F^2}{E}}} \right)\cos(\alpha), \\
&& K_\beta=-K^2\left(\frac{\frac{\partial }{\partial x}(\frac{F}{E})}{\sqrt{G-\frac{F^2}{E}}}-\frac{\frac{\partial }{\partial y}(\sqrt{E})}{\sqrt{E} \sqrt{G-\frac{F^2}{E}}}+ \frac{F}{E}\frac{\frac{\partial }{\partial x}(\sqrt{E})}{\sqrt{E} \sqrt{G-\frac{F^2}{E}}}+\frac{\alpha_x}{\sqrt{E}}\right)\cos(\alpha) \nonumber \\
&&\; -K^2\left(\frac{\frac{\partial }{\partial x}(\sqrt{G-\frac{F^2}{E}})}{\sqrt{G-\frac{F^2}{E}}\sqrt{E}}+\frac{\alpha_y-\frac{F}{E}\alpha_x}{\sqrt{G-\frac{F^2}{E}}} \right)\sin(\alpha), 
\end{eqnarray*}
}
which, after simplifications, actually becomes equivalent to system \eqref{condk}-\eqref{condka}:
{\footnotesize
\begin{eqnarray*}
&& K_\gamma=K^2\left( \frac{\frac{F^2}{E^2}E_x-2\frac{F}{E}F_x+G_x}{2(G-\frac{F^2}{E})\sqrt{E}}\cos(\alpha)-\frac{-E_y-\frac{F}{E}E_x+2F_x}{2E\sqrt{G-\frac{F^2}{E}}}\sin(\alpha) \right) \nonumber\\
&&\; +K^2\left(-\frac{\alpha_x}{\sqrt{E}}\sin(\alpha)+\frac{\alpha_y-\frac{F}{E}\alpha_x}{\sqrt{G-\frac{F^2}{E}}}\cos(\alpha) \right), \nonumber \\
&& K_\beta=K^2\left( -\frac{\frac{F^2}{E^2}E_x-2\frac{F}{E}F_x+G_x}{2(G-\frac{F^2}{E})\sqrt{E}}\sin(\alpha)-\frac{-E_y-\frac{F}{E}E_x+2F_x}{2E\sqrt{G-\frac{F^2}{E}}}\cos(\alpha) \right) \nonumber \\
&&\; +K^2\left(-\frac{\alpha_x}{\sqrt{E}}\cos(\alpha)-\frac{\alpha_y-\frac{F}{E}\alpha_x}{\sqrt{G-\frac{F^2}{E}}}\sin(\alpha) \right). \nonumber
\end{eqnarray*}
}
Let
{\footnotesize 
\begin{eqnarray*}
&P_1(x,y)=\frac{\frac{F^2}{E^2}E_x-2\frac{F}{E}F_x+G_x}{2(G-\frac{F^2}{E})\sqrt{E}},
&Q_1(x,y)=\frac{\alpha_y-\frac{F}{E}\alpha_x}{\sqrt{G-\frac{F^2}{E}}},\\
&P_2(x,y)=\frac{-E_y-\frac{F}{E}E_x+2F_x}{2E\sqrt{G-\frac{F^2}{E}}},
&Q_2(x,y)=\frac{\alpha_x}{\sqrt{E}},
\end{eqnarray*}
}
in order to simplify the computation. With these simplifications, the system \eqref{condk}-\eqref{condka} becomes
{\footnotesize
\begin{eqnarray*}
&& K_\gamma=K^2\left( (P_1+Q_1) \cos(\alpha)-(P_2+Q_2) \sin(\alpha)\right), \\
&& K_\beta=K^2\left( -(P_1+Q_1) \sin(\alpha)-(P_2+Q_2) \cos(\alpha)\right).
\end{eqnarray*}
}
or in its equivalent form
{\footnotesize 
\begin{eqnarray*}
&& -\left(\frac{1}{K}\right)_\gamma=\frac{K_\gamma}{K^2}=\left( (P_1+Q_1) \cos(\alpha)-(P_2+Q_2) \sin(\alpha)\right), \\
&&  -\left(\frac{1}{K}\right)_\beta=\frac{K_\beta}{K^2}=\left( -(P_1+Q_1) \sin(\alpha)-(P_2+Q_2) \cos(\alpha)\right).
\end{eqnarray*}
}
Since the quantity {\footnotesize $K$} is always positive, 
showing compatibility condition {\footnotesize $K_{\gamma\beta}=K_{\beta\gamma}$} is equivalent to showing compatibility condition
{\footnotesize 
$$-\left(\frac{1}{K}\right)_{\gamma\beta}=-\left(\frac{1}{K}\right)_{\beta\gamma}.$$
}
Next, we write the mixed partial derivatives of {$\frac{1}{K}$} in the following way 
{\footnotesize
\begin{eqnarray*}
-\left(\frac{1}{K}\right)_{\gamma\beta}&=& 
K\left[\left( -\frac{\frac{\partial}{\partial x}P_1}{\sqrt{E}}-\frac{\frac{\partial}{\partial y}P_2-\frac{F}{E}\frac{\partial}{\partial x}P_2}{\sqrt{G-\frac{F^2}{E}}} \right) \sin(\alpha)\cos(\alpha) \right.\nonumber \\
&&+\left.\left(\frac{\frac{\partial}{\partial y}P_1-\frac{F}{E}\frac{\partial}{\partial x}P_1}{\sqrt{G-\frac{F^2}{E}}}\cos^2(\alpha)+\frac{\frac{\partial}{\partial x}P_2}{\sqrt{E}}\sin^2(\alpha) \right)\right]+W_1(\alpha),
\end{eqnarray*}
\begin{eqnarray*}
-\left(\frac{1}{K}\right)_{\beta\gamma}&=& 
K\left[\left( -\frac{\frac{\partial}{\partial x}P_1}{\sqrt{E}}-\frac{\frac{\partial}{\partial y}P_2-\frac{F}{E}\frac{\partial}{\partial x}P_2}{\sqrt{G-\frac{F^2}{E}}} \right) \sin(\alpha)\cos(\alpha) \right.\nonumber \\
&&+\left.\left(-\frac{\frac{\partial}{\partial y}P_1-\frac{F}{E}\frac{\partial}{\partial x}P_1}{\sqrt{G-\frac{F^2}{E}}}\sin^2(\alpha)-\frac{\frac{\partial}{\partial x}P_2}{\sqrt{E}}\cos^2(\alpha) \right)\right]+W_2(\alpha).
\end{eqnarray*}
}
In the above computations, we introduced the quantities
{\footnotesize
\begin{align*}
W_1(\alpha)= P_1 (\cos(\alpha))_\beta - P_2(\sin(\alpha))_\beta + \left(Q_1 \cos(\alpha)-Q_2 \sin(\alpha)\right)_\beta,\\
W_2(\alpha)= -P_1 (\sin(\alpha))_\gamma - P_2(\cos(\alpha))_\gamma + \left(-Q_1 \sin(\alpha)-Q_2 \cos(\alpha)\right)_\gamma,
\end{align*}
}
for simplification. The expression {\footnotesize $W_1(\alpha)-W_2(\alpha)$} is evaluated below:
{\footnotesize
\begin{eqnarray*}
&&  W_1(\alpha)-W_2(\alpha)=K\left( -P_1\left(-\frac{\alpha_x}{\sqrt{E}}\sin^2(\alpha)+\frac{\alpha_y-\frac{F}{E}\alpha_x}{\sqrt{G-\frac{F^2}{E}}}\sin(\alpha)\cos(\alpha)\right)\right) \nonumber \\
&& -K\left(P_2\left(-\frac{\alpha_x}{\sqrt{E}}\sin(\alpha)\cos(\alpha)+\frac{\alpha_y-\frac{F}{E}\alpha_x}{\sqrt{G-\frac{F^2}{E}}}\cos^2(\alpha)\right) \right) 
\nonumber \\ && 
+K\left(P_1\left(\frac{\alpha_x}{\sqrt{E}}\cos^2(\alpha)+\frac{\alpha_y-\frac{F}{E}\alpha_x}{\sqrt{G-\frac{F^2}{E}}}\sin(\alpha)\cos(\alpha)\right) \right) \nonumber \\
&& -K\left(P_2\left(\frac{\alpha_x}{\sqrt{E}}\sin(\alpha)\cos(\alpha)+\frac{\alpha_y-\frac{F}{E}\alpha_x}{\sqrt{G-\frac{F^2}{E}}}\sin^2(\alpha)\right) \right) \nonumber \\
&& -K\left( \frac{\frac{\partial}{\partial x}(\frac{\alpha_x}{\sqrt{E}})}{\sqrt{E}}\cos^2(\alpha)+\frac{\frac{\partial}{\partial x}(\frac{\alpha_y-\frac{F}{E}\alpha_x}{\sqrt{G-\frac{F^2}{E}}})}{\sqrt{E}}\sin(\alpha)\cos(\alpha)-\frac{\alpha^2_x}{E}\sin(\alpha)\cos(\alpha)  \right) \nonumber \\
&& -K\left( \frac{\frac{\partial}{\partial y}(\frac{\alpha_x}{\sqrt{E}})}{\sqrt{G-\frac{F^2}{E}}}\sin(\alpha)\cos(\alpha)+\frac{\frac{\partial}{\partial y}(\frac{\alpha_y-\frac{F}{E}\alpha_x}{\sqrt{G-\frac{F^2}{E}}})}{\sqrt{G-\frac{F^2}{E}}}\sin^2(\alpha)-\frac{\alpha^2_x}{\sqrt{E}\sqrt{G-\frac{F^2}{E}}}\sin(\alpha)\cos(\alpha) \right) \nonumber \\
&& -K\left(-\frac{F}{E}\frac{(\frac{\partial}{\partial x}(\frac{\alpha_y-\frac{F}{E}\alpha_x}{\sqrt{G-\frac{F^2}{E}}})+(\frac{\alpha_x}{\sqrt{E}})\alpha_x)}{\sqrt{G-\frac{F^2}{E}}}\sin^2(\alpha)+(\frac{\alpha_y-\frac{F}{E}\alpha_x}{\sqrt{G-\frac{F^2}{E}}})\frac{\alpha_x}{\sqrt{E}}\cos^2(\alpha)  \right) \nonumber \\
&& -K\left( \frac{\frac{\partial}{\partial x}(\frac{\alpha_x}{\sqrt{E}})}{\sqrt{E}}\sin^2(\alpha)-\frac{\frac{\partial}{\partial x}(\frac{\alpha_y-\frac{F}{E}\alpha_x}{\sqrt{G-\frac{F^2}{E}}})}{\sqrt{E}}\sin(\alpha)\cos(\alpha)+\frac{\alpha^2_x}{E}\sin(\alpha)\cos(\alpha)  \right) \nonumber \\
&& -K\left(-\frac{\frac{\partial}{\partial y}(\frac{\alpha_x}{\sqrt{E}})}{\sqrt{G-\frac{F^2}{E}}}\sin(\alpha)\cos(\alpha)+\frac{\frac{\partial}{\partial y}(\frac{\alpha_y-\frac{F}{E}\alpha_x}{\sqrt{G-\frac{F^2}{E}}})}{\sqrt{G-\frac{F^2}{E}}}\cos^2(\alpha)+\frac{\alpha^2_x}{\sqrt{E}\sqrt{G-\frac{F^2}{E}}}\sin(\alpha)\cos(\alpha) \right) \nonumber \\
&& -K\left(-\frac{F}{E}\frac{(\frac{\partial}{\partial x}(\frac{\alpha_y-\frac{F}{E}\alpha_x}{\sqrt{G-\frac{F^2}{E}}})+(\frac{\alpha_x}{\sqrt{E}})\alpha_x)}{\sqrt{G-\frac{F^2}{E}}}\cos^2(\alpha)+(\frac{\alpha_y-\frac{F}{E}\alpha_x}{\sqrt{G-\frac{F^2}{E}}})\frac{\alpha_x}{\sqrt{E}}\sin^2(\alpha)  \right)=\nonumber\\
&&K\left[\left( P_1(\frac{\alpha_x}{\sqrt{E}})-P_2(\frac{\alpha_y-\frac{F}{E}\alpha_x}{\sqrt{G-\frac{F^2}{E}}})\right) \nonumber 
 -\left( \frac{\frac{\partial}{\partial x}(\frac{\alpha_x}{\sqrt{E}})}{\sqrt{E}}+(\frac{\alpha_y-\frac{F}{E}\alpha_x}{\sqrt{G-\frac{F^2}{E}}})\frac{\alpha_x}{\sqrt{E}}+\frac{\frac{\partial}{\partial y}(\frac{\alpha_y-\frac{F}{E}\alpha_x}{\sqrt{G-\frac{F^2}{E}}})}{\sqrt{G-\frac{F^2}{E}}}  \right)\right. \nonumber \\
&& \quad\; \left.-\left(-\frac{F}{E}\frac{(\frac{\partial}{\partial x}(\frac{\alpha_y-\frac{F}{E}\alpha_x}{\sqrt{G-\frac{F^2}{E}}})+(\frac{\alpha_x}{\sqrt{E}})\alpha_x)}{\sqrt{G-\frac{F^2}{E}}}  \right)\right]. \nonumber
\end{eqnarray*}}
Next, we impose the compatibility condition
{\footnotesize
\begin{eqnarray*}0&=&\left(\frac{1}{K}\right)_{\beta\gamma}-\left(\frac{1}{K}\right)_{\gamma\beta}
=K\left( \frac{\frac{\partial}{\partial y}P_1-\frac{F}{E}\frac{\partial}{\partial x}P_1}{\sqrt{G-\frac{F^2}{E}}}+\frac{\frac{\partial}{\partial x}P_2}{\sqrt{E}} \right)+W_1(\alpha)-W_2(\alpha)=\nonumber\\
&=& K\left( \frac{\frac{\partial}{\partial y}P_1-\frac{F}{E}\frac{\partial}{\partial x}P_1}{\sqrt{G-\frac{F^2}{E}}}+\frac{\frac{\partial}{\partial x}P_2}{\sqrt{E}}+ P_1(\frac{\alpha_x}{\sqrt{E}})-P_2(\frac{\alpha_y-\frac{F}{E}\alpha_x}{\sqrt{G-\frac{F^2}{E}}}) \right) \nonumber  \\
&& -K\left( \frac{\frac{\partial}{\partial x}(\frac{\alpha_x}{\sqrt{E}})}{\sqrt{E}}+(\frac{\alpha_y-\frac{F}{E}\alpha_x}{\sqrt{G-\frac{F^2}{E}}})\frac{\alpha_x}{\sqrt{E}}+\frac{\frac{\partial}{\partial y}(\frac{\alpha_y-\frac{F}{E}\alpha_x}{\sqrt{G-\frac{F^2}{E}}})}{\sqrt{G-\frac{F^2}{E}}}-\frac{F}{E}\frac{(\frac{\partial}{\partial x}(\frac{\alpha_y-\frac{F}{E}\alpha_x}{\sqrt{G-\frac{F^2}{E}}})+(\frac{\alpha_x}{\sqrt{E}})\alpha_x)}{\sqrt{G-\frac{F^2}{E}}}   \right). \nonumber \\
\end{eqnarray*}
}
After dividing the equation by $K$ and replacing the expressions of $P_1$ and $P_2$, we obtain the condition stated in the theorem.
\end{proof}

\begin{corollary}
In case $F=0$, the necessary and sufficient condition \eqref{NecSufCond} reduces to
{\footnotesize
\begin{eqnarray*}
&&\frac{1}{\sqrt{G}}\frac{\partial}{\partial y}\left(\frac{G_x}{2 G \sqrt{E}}\right)-
\frac{1}{\sqrt{E}}\frac{\partial}{\partial x}\left(\frac{E_y}{2 \sqrt{G} E}\right)\nonumber\\
&&\qquad+\frac{G_x \alpha_x+E_y \alpha_y}{2 G E}-
\left(\frac{\frac{\partial}{\partial x}(\frac{\alpha_x}{\sqrt{E}})}{\sqrt{E}}+
      \frac{\alpha_y \alpha_x}{\sqrt{G E}}+
      \frac{\frac{\partial}{\partial y}(\frac{\alpha_y}{\sqrt{G}})}{\sqrt{G}}
\right)=0. \nonumber 
\end{eqnarray*}}
\end{corollary}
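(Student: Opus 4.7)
The plan is to specialize the general necessary-and-sufficient condition \eqref{NecSufCond} to the hypothesis $F \equiv 0$ on the domain, which entails $F_x \equiv F_y \equiv 0$ as well, and then collect terms. No new geometric ideas are needed; the work is purely a term-by-term reduction of the long expression in \eqref{NecSufCond}.

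First I would record the simplifications forced by $F \equiv 0$: the quantity $G - F^2/E$ collapses to $G$, so every $\sqrt{G - F^2/E}$ becomes $\sqrt{G}$; every explicit $\tfrac{F}{E}$ factor kills the term it multiplies; and every $\tfrac{F^2}{E^2}E_x$ or $\tfrac{F}{E}F_x$ vanishes. Under these reductions, the abbreviations used in the proof of the theorem collapse to
\begin{equation*}
P_1 = \frac{G_x}{2 G \sqrt{E}}, \qquad P_2 = \frac{-E_y}{2 E \sqrt{G}}, \qquad Q_1 = \frac{\alpha_y}{\sqrt{G}}, \qquad Q_2 = \frac{\alpha_x}{\sqrt{E}}.
\end{equation*}

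Next I would substitute these into the three lines of \eqref{NecSufCond}. The first line, $\tfrac{\partial_y P_1 - (F/E)\partial_x P_1}{\sqrt{G - F^2/E}} + \tfrac{\partial_x P_2}{\sqrt{E}}$, becomes $\tfrac{1}{\sqrt{G}}\partial_y\bigl(\tfrac{G_x}{2G\sqrt{E}}\bigr) - \tfrac{1}{\sqrt{E}}\partial_x\bigl(\tfrac{E_y}{2 E \sqrt{G}}\bigr)$. The middle line simplifies to $P_1 Q_2 - P_2 Q_1 = \tfrac{G_x \alpha_x + E_y \alpha_y}{2 G E}$. The third line loses its entire $F/E$ group and reduces to the bracketed expression $\tfrac{\partial_x(\alpha_x/\sqrt{E})}{\sqrt{E}} + \tfrac{\alpha_x \alpha_y}{\sqrt{G E}} + \tfrac{\partial_y(\alpha_y/\sqrt{G})}{\sqrt{G}}$, subtracted from the rest. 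Assembling these pieces gives exactly the displayed identity in the corollary.

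There is no genuine obstacle here beyond careful bookkeeping; the only place where a small care is warranted is verifying that the hypothesis $F = 0$ is being used in the stronger sense $F \equiv 0$ on the coordinate patch (so that partial derivatives of $F$ also vanish), since otherwise the stray $F_x$ in $P_2$ would survive. Once that is noted, the corollary follows by direct substitution into \eqref{NecSufCond}.
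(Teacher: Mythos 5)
Your proposal is correct and is exactly the direct term-by-term specialization (setting $F\equiv 0$, hence $F_x=F_y=0$, $\sqrt{G-F^2/E}=\sqrt{G}$, and collapsing $P_1,P_2,Q_1,Q_2$) that the paper intends; the paper itself states this corollary without proof precisely because it is this routine substitution into \eqref{NecSufCond}. Your remark that $F=0$ must hold identically on the patch, so that the $2F_x$ term in $P_2$ also vanishes, is the one point worth making explicit, and you made it.
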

\begin{corollary}
For isothermal coordinate lines, $G=E$ and $F=0$, the necessary and sufficient condition \eqref{NecSufCond} reduces to
{\footnotesize
\begin{equation*}
\frac{\partial }{\partial x}\left(\frac{\alpha_x}{E}\right)+
\frac{\alpha_y \alpha_x}{E}+
\frac{\partial }{\partial y}\left(\frac{\alpha_y}{E}\right)
=0. \nonumber 
\end{equation*}}
\end{corollary}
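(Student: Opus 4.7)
The plan is to start from the $F=0$ specialization established in the preceding corollary and impose the additional hypothesis $G=E$. After substituting $G=E$ throughout that identity, I would split the left-hand side into two blocks: the \emph{metric-only} terms (involving derivatives of $E$ and $G$ alone) and the remaining \emph{$\alpha$-terms} (those in which $\alpha$ appears).

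For the metric-only block, under $G=E$ it reduces to
$\frac{1}{\sqrt{E}}\partial_y\!\left(\frac{E_x}{2E^{3/2}}\right)-\frac{1}{\sqrt{E}}\partial_x\!\left(\frac{E_y}{2E^{3/2}}\right)$. A direct application of the quotient/product rule shows that both inner derivatives equal $\frac{E_{xy}}{2E^{3/2}}-\frac{3E_xE_y}{4E^{5/2}}$, so this block vanishes identically by equality of mixed partials $E_{xy}=E_{yx}$. This cancellation is the one conceptual observation that simplifies the whole expression; without it, the claim looks considerably more complicated than it is.

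For the remaining $\alpha$-block I would expand each derivative under $G=E$. Using $\frac{1}{\sqrt{E}}\partial_x(\alpha_x/\sqrt{E})=\alpha_{xx}/E-\alpha_xE_x/(2E^{2})$, together with its analogue in $y$ and the reduction $\alpha_y\alpha_x/\sqrt{GE}=\alpha_x\alpha_y/E$, I expect the block to collapse to
$\frac{E_x\alpha_x+E_y\alpha_y}{E^{2}}-\frac{\alpha_{xx}+\alpha_{yy}}{E}-\frac{\alpha_x\alpha_y}{E}$. Then I would recognize this, via the quotient rule in reverse, as $-\left[\partial_x(\alpha_x/E)+\partial_y(\alpha_y/E)+\alpha_x\alpha_y/E\right]$, so equating the full expression to zero yields the stated identity (up to an overall sign, which is immaterial when the right-hand side is zero).

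I do not anticipate a serious obstacle. The argument is an organized bookkeeping exercise, and the only genuine insight is the symmetry-of-mixed-partials cancellation of the metric-only block. The remaining manipulations are elementary and can be streamlined by grouping terms according to the derivative structure of $\alpha$ and by factoring out common powers of $1/E$ before attempting the final quotient-rule identifications.
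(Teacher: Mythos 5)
Your proposal is correct and follows exactly the route the paper intends (the corollary is stated without an explicit proof, as a direct specialization of the preceding $F=0$ corollary to $G=E$): the metric-only block cancels by equality of mixed partials, and the remaining terms regroup via the quotient rule into the stated identity. All of your intermediate expressions check out, so nothing further is needed.
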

\begin{corollary}
In case $F=0$, $m=0$ and away from umbilic points, the rotation angle $\alpha$ is zero and the necessary and sufficient condition \eqref{NecSufCond} reduces to
{\footnotesize
\begin{eqnarray*}
&& \left[\ln{\left(\frac{G}{E}\right)}\right]_{xy}=0\nonumber, 
\end{eqnarray*}}
or in its equivalent form
{\small
\begin{eqnarray*}
&& \frac{G}{E}=\phi(x)\psi(y)\nonumber, 
\end{eqnarray*}}
for any arbitral smooth functions $\phi(x)$ and $\psi(y)$.
\end{corollary}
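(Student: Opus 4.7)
The plan splits into three natural steps. First, I would pin down the rotation angle. When $F=0$ and $m=0$, the numerator $(-Fl+Em)\sqrt{EG-F^2}$ in \eqref{alphann} vanishes identically, and the denominator $(2F^2-EG)l-2EFm+E^2n$ collapses to $E(En-Gl)$. At $F=m=0$ the principal curvatures are exactly $l/E$ and $n/G$, so the umbilic locus coincides with $\{En-Gl=0\}$; hence on the domain we work in the denominator is nonzero, $\tan(2\alpha)=0$, and we pick the canonical branch $\alpha\equiv 0$ (item (c) of the Important Remark tells us the other branches only relabel the two isothermic directions).

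With $\alpha\equiv 0$, every $\alpha$-dependent summand in the $F=0$ reduction of the preceding corollary drops out, and \eqref{NecSufCond} collapses to
\begin{equation*}
\frac{1}{\sqrt{G}}\,\frac{\partial}{\partial y}\!\left(\frac{G_x}{2G\sqrt{E}}\right)-\frac{1}{\sqrt{E}}\,\frac{\partial}{\partial x}\!\left(\frac{E_y}{2E\sqrt{G}}\right)=0.
\end{equation*}
The core task is to rewrite this as $[\ln(G/E)]_{xy}=0$. The slick device is to set $u:=\ln E$ and $v:=\ln G$, so that $G_x/G=v_x$, $E_y/E=u_y$, $1/\sqrt{E}=e^{-u/2}$, $1/\sqrt{G}=e^{-v/2}$. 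A direct differentiation then yields
\begin{equation*}
\frac{1}{\sqrt{G}}\,\partial_y\!\left(\tfrac{1}{2}v_x\, e^{-u/2}\right)=\tfrac{1}{2}\left(v_{xy}-\tfrac{1}{2}v_x u_y\right)e^{-(u+v)/2},
\end{equation*}
\begin{equation*}
\frac{1}{\sqrt{E}}\,\partial_x\!\left(\tfrac{1}{2}u_y\, e^{-v/2}\right)=\tfrac{1}{2}\left(u_{xy}-\tfrac{1}{2}u_y v_x\right)e^{-(u+v)/2}.
\end{equation*}
Subtracting cancels the cross terms $\tfrac{1}{2}v_x u_y$ exactly, leaving the condition $(v_{xy}-u_{xy})/(2\sqrt{EG})=0$. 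Since $\sqrt{EG}>0$, this is equivalent to $(v-u)_{xy}=[\ln(G/E)]_{xy}=0$.

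For the factored form, on the simply connected domain $D$ the identity $[\ln(G/E)]_{xy}=0$ says that $[\ln(G/E)]_x$ depends only on $x$, so $\ln(G/E)=\Phi(x)+\Psi(y)$ for some smooth $\Phi,\Psi$, and exponentiating yields $G/E=e^{\Phi(x)}\,e^{\Psi(y)}=\phi(x)\psi(y)$. The only real obstacle is the algebraic collapse in the middle step; the logarithmic substitution is what makes the cross-term cancellation transparent, without which one would need to expand four nested quotient-rule terms by hand to see the same thing.
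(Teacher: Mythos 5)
Your proposal is correct and follows exactly the route the paper intends (the paper states this corollary without writing out the computation): specialize \eqref{alphann} to get $\alpha\equiv 0$ away from umbilics, drop the $\alpha$-terms from the $F=0$ reduction, and simplify the two remaining derivative terms. The logarithmic substitution $u=\ln E$, $v=\ln G$ is a clean way to exhibit the cancellation of the cross terms, and the passage to $G/E=\phi(x)\psi(y)$ on the simply connected domain is handled properly.
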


Finally, we are resuming the construction of the isothermic coordinates, based on the algorithm
we presented starting from an initial coordinate chart $(x,y)$, followed by a well-chosen rotation of angle $\alpha(x,y)$ and a re-scaling $K(x,y)$, all up to roto-translations and similarity transformations.

\begin{example}
How is the actual construction algorithm handled numerically? In practice, we integrate along $\gamma$ and $\beta$ and draw a virtual mesh to construct the surfaces:
\begin{equation}
 f(\gamma,\beta)=f(0,0)+\int_0^\beta \!f_{\gamma}(0,\bar{\beta}(x,y)) \,d\bar{\beta}+\int_0^\gamma \!f_{\beta}(\bar{\gamma}(x,y),\beta(x,y)) \,d\bar{\gamma}. \label{totalODE}
\end{equation}
We start constructing the surface in a neighborhood of $(0,0)$ by solving a succession of PDE's.
First, we fix $\gamma=0$ and move along $\beta$, then we fix $\beta$, and move along $\gamma$. Because of
the compatibility condition, reversing the order of integration would not change the value $f(\gamma,\beta)$.

Note that in \eqref{totalODE} we integrate along $(\gamma,\beta)$, 
while the integrands are given in terms of the initial parameters $(x,y)$.
In order to integrate, we need first to solve and keep track of all the
quantities 
$$x(\gamma,\beta),\;y(\gamma,\beta),\;f_{\gamma}(x(\gamma,\beta),y(\gamma,\beta)), 
\mbox{ and } f_{\beta}(x(\gamma,\beta),y(\gamma,\beta)),$$ 
and with these solve for $f(\gamma,\beta)$ in \eqref{totalODE}.
Thus, the problem is solved in two steps:
\begin{enumerate}
\item First, constructing the mapping $x(\gamma,\beta)$, $y(\gamma,\beta)$ and the 
vector fields $f_{\gamma}$, $f_{\beta}$ by solving numerically, with a fourth-order Runge-Kutta method,
the systems \eqref{xbetagamma}-\eqref{ybetagammaa}, and \eqref{ffgambet}-\eqref{ffgambeta}, respectively.
\item Then, constructing the surface locally by solving numerically the system \eqref{totalODE} 
with a fourth-order Runge-Kutta method.
\end{enumerate}

The Unduloid is an example of a CMC-Delaunay surface which is generated by revolving a curve in arclength $s$, 
namely $C=(X(s), Z(s))$ in the $xz$-plane, around the $z$-axis. This surface of revolution is therefore parameterized as

$$ f(s,\theta)= \left( \begin{array}{ccc}
X(s)\cos{\theta}   \\
X(s)\sin{\theta}   \\
Z(s)   \end{array} \right),$$

where the angle $\theta$ takes values between $0$ and $2 \pi$.

Note that the parameters $(s, \theta)$ are not isothermal.  

We applied our algorithm starting from the parameterization $(s, \theta)$ (in the role of original chart $(x,y)$), and 
constructed a pair of isothermic coordinates, $(\beta, \gamma)$.

The figure below represents the CMC unduloid that we obtained by applying our construction algorithm.  

\begin{figure}[!h]
\begin{center}
\includegraphics[scale=0.5, angle=90]{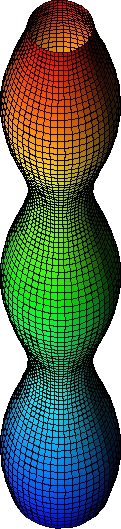}  
\end{center}
\caption{Unduloid surface in isothermic parametrization.}
\end{figure}

\end{example}

{\bf Acknowledgement.} The authors are grateful for the financial support received from the National Science Foundation as grant NSF-DMS 0908177, which was very helpful for their joint research in this area. 

\end{document}